\documentclass[11pt]{article}
\usepackage{amsmath, amssymb, amsthm}
\usepackage{bbm}
\usepackage{enumitem}
\usepackage{hyperref}
\usepackage{tikz}
\usetikzlibrary{calc}
\usepackage{xcolor}
\usepackage{multirow}
\usepackage{comment}

\newtheorem{theorem}{Theorem}[section]
\newtheorem{lemma}{Lemma}[section]

\newtheorem{proposition}{Proposition}[section]
\theoremstyle{remark}
\newtheorem{remark}{Remark}

\newcommand{\N}{\mathbb{N}}

\newcommand{\Z}{\mathbb{Z}}
\newcommand{\E}{\mathbb{E}}
\newcommand{\Prob}{\mathbb{P}}

\newcommand{\leftb}[1]{\mathtt{l}_{#1}}
\newcommand{\rightb}[1]{\mathtt{r}_{#1}}

\usepackage[left=1in, right=1in, bottom = 1in, top = 1in]{geometry}
\title{Limit shape of single-source stochastic sandpiles with $p$-topplings on $\Z$}
\author{David Beck-Tiefenbach, Robin Kaiser, Julia Überbacher}

\begin{document}

\maketitle
\begin{abstract}
    We investigate the limit shape of the single-source model for stochastic sandpiles on the integer line subject to $p$--topplings. In this model, an initial configuration of $n\in\N$ particles is placed at the origin and stabilized according to a random toppling rule depending on $p\in (0,1)$: an unstable vertex sends exactly one particle to its left neighbor with probability $p$, and independently sends exactly one particle to its right neighbor with probability $p$. We prove that as $n \to \infty$, the macroscopic limit shape of the final stable configuration is a symmetric interval around the origin. Furthermore, by analyzing the center of mass martingale, we establish a central limit theorem for the boundary fluctuations, showing that after proper rescaling, they converge to a Gaussian distribution.
\end{abstract}

{\it Keywords:} stochastic sandpiles, abelian, single-source, limit shape, normal distribution, martingale, center of mass, fluctuations, line.

{2020 Mathematics Subject Classification.} 60G42, 60F05, 60K35.

\section{Introduction}
Stochastic sandpiles are a randomized version of the Abelian sandpile model, which was first introduced by Bak, Tang and Wiesenfeld \cite{BTW87,BTW88} and later generalized to arbitrary graphs by Dhar \cite{D90}. In the Abelian sandpile model, vertices are declared unstable if too many particles accumulate at a single vertex, upon which the vertices are allowed to topple by sending particles to their neighbors. In this way, the model eventually reaches a critical equilibrium state; a phenomenon known as \emph{self-organized criticality}.

Whereas for the Abelian sandpile model, topplings of unstable vertices are deterministic operations, for stochastic sandpiles we allow the topplings themselves to be random. Several random toppling rules have been investigated before \cite{M91,RS12,HHRR22,CF25}, but in this paper we concentrate on $p$-topplings on the integer line $\Z$, which were first introduced and investigated in \cite{ptoppling-CMS}. On $\mathbb{Z}$, a vertex $v$ is declared unstable and legally allowed to topple if it contains at least two particles. During a $p$-toppling, $v$ acts on its adjacent neighbors by sending a particle to each neighbor independently with probability $p$, i.e. two particles are sent, one to the left and to the right each, with probability $p^2$. With probability $p(1-p)$ exactly one particle is sent to either the left or the right neighbor, while with probability $(1-p)^2$ the configuration does not change. 

Typical questions concerning stochastic sandpiles range from the behaviour of the model on finite graphs, where in each time step we add particles at random and stabilize the resulting configuration \cite{S24,SZ25,CF25} (known as \textit{driven-dissipative systems}), to the characterization of stabilization of randomly chosen configurations on infinite graphs \cite{RS12,HHRR22,CFT24} (known as \textit{fixed-energy systems}). See also \cite{LS24} for the universality conjecture, which connects these two concepts.

In the current note, we study the \textit{single-source model} on $\Z$, analyzing the final stable configuration produced by placing $n \in \N$ particles at the origin and stabilizing the system. The single-source model has previously been investigated for different particle systems, such as \textit{internal diffusion limited aggregation} (IDLA) \cite{LBG92}, \textit{rotor-router aggregation} and \textit{divisible sandpiles} \cite{LP09}. The limit shape of the single-source model for Abelian sandpiles is still an open question, the existence of the limit, in the sense that the rescaled sandpiles converge, has been proven \cite{PS13}.

Let us denote by $\eta^{(n)}:\Z\rightarrow\{0,1\}$ the final stable configuration after placing exactly $n$ particles at the origin and stabilizing. Define the left and right boundary vertices as
\begin{align}
\label{eq:boundary}\tag{BDRY}
    \leftb{n}:=\inf\{v\in\Z\;|\; \eta^{(n)}(v)>0\},&&\rightb{n}:=\sup\{v\in\Z\;|\; \eta^{(n)}(v)>0\}.
\end{align}
The first result gives the limit shape obtained after rescaling with $\frac{1}{n}$, that is, we derive a law of large numbers for the left and right boundaries.
\begin{theorem}[Limit shape of the single-source model]\label{thm:main1}
    Denote the final limit shape after stabilizing $n\in\N$ particles placed at the origin by
    $\mathcal{S}_n:=[\leftb{n},\rightb{n}]\cap\Z.$
    Then for every $\varepsilon>0$
    $$\lim_{n\rightarrow\infty}\mathbb{P}\Big([-\tfrac{n}{2}(1-\varepsilon),\tfrac{n}{2}(1-\varepsilon)]\cap\Z\subseteq \mathcal{S}_n\subseteq [-\tfrac{n}{2}(1+\varepsilon),\tfrac{n}{2}(1+\varepsilon)]\cap\Z\Big)=1.$$
\end{theorem}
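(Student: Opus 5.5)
The plan is to combine two conservation laws with a structural description of the stable configuration. Mass is conserved, $\sum_v\eta^{(n)}(v)=n$. Since $\eta^{(n)}$ is $\{0,1\}$-valued, this gives $\rightb{n}-\leftb{n}+1 = n + H_n$, where $H_n$ is the number of holes (empty sites) in $[\leftb{n},\rightb{n}]$. The statement then reduces to two claims:
\begin{enumerate}[label=(\roman*)]
\item $H_n=o(n)$ with probability tending to one, so the width of $\mathcal{S}_n$ is $n(1+o(1))$;
\item the midpoint $(\leftb{n}+\rightb{n})/2$ is $o(n)$ in probability.
\end{enumerate}
Together these force $\leftb{n}=-\tfrac n2(1+o(1))$ and $\rightb{n}=\tfrac n2(1+o(1))$, which is exactly the two-sided inclusion in Theorem~\ref{thm:main1}.

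For (i), I would prove a structural lemma: $[\leftb{n},\rightb{n}]$ is an interval containing at most a bounded number of holes, and ideally at most one, as in the deterministic Abelian sandpile on $\Z$. To do this I would use the abelian property of $p$-topplings from \cite{ptoppling-CMS}, which lets me fix a convenient toppling order, for instance always toppling the leftmost unstable site. I would then maintain an invariant of the form ``the occupied region is an interval of sites carrying $1$, except at most one site carrying $0$ and the sites carrying $\ge 2$''. The point to check is that a toppling $2\to 0$ of an interior site immediately feeds particles to its neighbours, which are then refilled. \textbf{This is the step I expect to be the main obstacle}, because a $p$-toppling can empty a site, and random partial topplings might in principle leave many holes. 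If the exact invariant fails, I would fall back on a weaker statement that suffices here: $\E[H_n]=o(n)$. I would try to get this by showing that each hole must be ``paid for'' by a toppling event of small probability near the current boundary.

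For (ii), I would use the center-of-mass process $M_t=\sum_v v\,\eta_t(v)$ and the second moment $Q_t=\sum_v v^2\eta_t(v)$. A toppling at $x$ changes $M$ by $-1$ or $+1$ with probability $p(1-p)$ each, and otherwise by $0$. Hence $M$ is a martingale with increments bounded by $1$, starting at $M_0=0$. For $Q$, the expected increment of a toppling at $x$ is $p(1-2x)+p(1+2x)=2p$. Optional stopping at the stabilization time (finite a.s.\ by \cite{ptoppling-CMS}), with truncation if needed, then gives
$$\E\big[M_\infty^2\big]\le \E[T]=\frac{\E[Q_\infty]}{2p},$$
where $T$ is the total number of topplings.

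It remains to bound $\E[Q_\infty]$. Since $Q_\infty\le n\max(\leftb{n}^2,\rightb{n}^2)$, I need an a priori linear bound on the support, say $\E[\max(|\leftb{n}|,|\rightb{n}|)^2]\le Cn^2$. I would derive this from (i) together with a crude bound on the midpoint, iterating if necessary. With it, $\E[M_\infty^2]\le Cn^3/p$, so $M_\infty=O_{\Prob}(n^{3/2})$. On the other hand, when the configuration is an interval of ones with $H_n$ holes,
$$M_\infty = n\,\frac{\leftb{n}+\rightb{n}}{2} + O\big(H_n\cdot n\big).$$
This gives $(\leftb{n}+\rightb{n})/2 = O_{\Prob}(n^{1/2})+O(H_n)=o(n)$ in probability. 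Combining this with the width estimate from (i) completes the argument.
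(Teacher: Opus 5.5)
Your martingale half is the paper's argument. $M$ is a martingale with increments in $\{-1,0,1\}$, and $Q$ (the paper's $S$) has drift $2p$ per toppling. Optional stopping then gives $\E T=O(n^3)$ and $\E[M_\infty^2]=2p(1-p)\E T=O(n^3)$, and Chebyshev controls the midpoint.

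\textbf{The gap: step (i) is unproved, and your invariant is false.} Step (i) carries the real content. Take $n=4$ and let the first four instructions used be $\leftrightarrow$ (at $0,0,-1,1$ in the leftmost-first order, probability $p^8$). The configurations are $\delta_{-1}+2\delta_0+\delta_1$, then $2\delta_{-1}+2\delta_1$, then $\delta_{-2}+\delta_0+2\delta_1$, then $\delta_{-2}+2\delta_0+\delta_2$. The last has two empty sites $\pm1$ inside the occupied region, and it has positive probability under any toppling order. So ``at most one $0$'' is not preserved during stabilization.

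Your fallback has no mechanism behind it. Holes are created by $\leftrightarrow$ topplings, which have constant probability $p^2$ and occur of order $n^3$ times, so there is no rare event to pay for them.

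Also, you need (i) in a stronger form than ``$H_n=o(n)$ in probability''. To justify $2p\,\E T=\lim_k\E[Q_{T\wedge k}]\le Cn^3$ you need a bound on $Q_t$ at \emph{all} intermediate times. $Q$ is only a submartingale, so $\E T=\E[Q_\infty]/(2p)$ requires uniform integrability. A bound in probability on the number of holes gives neither this nor your $L^2$ bound on the support, and iterating on the midpoint does not supply it.

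\textbf{The missing ingredient} is a proof that the final configuration has at most one hole. The paper does this in two steps:
\begin{itemize}
\item A visited site can become empty only by a $\leftrightarrow$ toppling from exactly two particles, which feeds both neighbours. Hence two adjacent visited sites are never simultaneously empty: whichever toppled last refilled the other.
\item In the leftmost-first order, follow the particle sent left by the last toppling of the rightmost final hole $w$. Everything left of $w$ is stable at that moment, so this particle must be absorbed by a hole strictly between $w$ and another final hole $v$. Iterating produces two adjacent holes, a contradiction.
\end{itemize}

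A repair closer to your invariant approach is to track two properties at every time:
\begin{itemize}
\item the two endpoints of the visited interval are occupied;
\item between any two empty sites of the visited interval there is a site holding at least two particles.
\end{itemize}
Every toppling, in any order, preserves both, by a short case check on whether the toppled site or its neighbours were empty. At stabilization there are no sites with two particles, so this yields $\rightb{n}-\leftb{n}+1\in\{n,n+1\}$ and $0\in[\leftb{n},\rightb{n}]$, hence $|\leftb{n}|,|\rightb{n}|\le n$. Since the visited interval never exceeds the final support, $Q_t\le n^3$ for all $t$. With this in place the rest of your argument goes through directly, and no iteration is needed.
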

That is, the final limit shape $\mathcal{S}_n$ rescaled by $\frac{1}{n}$ converges to a symmetric interval $[-\frac{1}{2},\frac{1}{2}]$ in probability. Although Theorem \ref{thm:main1} ensures sublinear boundary fluctuations for $\mathcal{S}_n$, it does not identify their exact order. As we show next, these fluctuations actually scale as $\sqrt{n}$ and satisfy a central limit theorem.
\begin{theorem}[Gaussian fluctuations at the boundary]\label{thm:main2}
    For the boundaries $\leftb{n}$ and $\rightb{n}$ of the final stable configuration after placing $n\in\N$ particles at the origin as defined in $\eqref{eq:boundary}$, it holds that
    \begin{align*}
        \frac{\leftb{n}+\frac{n}{2}}{\sqrt{n}}\xrightarrow{\mathcal{D}}\mathcal{N}\bigg(0,\frac{1-p}{12}\bigg)&&\text{and}&&\frac{\rightb{n}-\frac{n}{2}}{\sqrt{n}}\xrightarrow{\mathcal{D}}\mathcal{N}\bigg(0,\frac{1-p}{12}\bigg),
    \end{align*}
    where $\xrightarrow{\mathcal{D}}$ denotes convergence in distribution, as $n \rightarrow \infty$.
\end{theorem}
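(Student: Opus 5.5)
The plan is to reduce the boundary fluctuations to the fluctuations of the center-of-mass martingale, and then to apply a martingale central limit theorem. The first step is to identify the form of the final configuration. Since $\eta^{(n)}\in\{0,1\}^{\Z}$ and mass is conserved, $\rightb{n}-\leftb{n}+1\ge n$. I would also show (or take from the structural analysis underlying Theorem~\ref{thm:main1}) that the support is an interval containing at most a bounded number of holes, so that $\rightb{n}-\leftb{n}=n+O(1)$. I expect this to hold because a hole can only be created at the rim of the active region in a one-dimensional system. Given this, write $M_t:=\sum_x x\,\eta_t(x)$ for the first moment after $t$ topplings. For the final configuration,
\begin{align*}
M_\infty = n\,\leftb{n}+\tfrac{n(n-1)}{2}+O(n),
\end{align*}
so that $\frac{\leftb{n}+n/2}{\sqrt n}=\frac{M_\infty}{n^{3/2}}+o(1)$. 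The symmetric identity holds for $\rightb{n}$. It therefore suffices to prove $M_\infty/n^{3/2}\xrightarrow{\mathcal D}\mathcal N(0,\frac{1-p}{12})$.

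A toppling at $v$ changes $M$ by $\xi\in\{-1,0,1\}$, with $\Prob(\xi=\pm1)=p(1-p)$ and $\Prob(\xi=0)=p^2+(1-p)^2$, independently of the past. Hence $(M_t)$ is a martingale with bounded increments and conditional variance $2p(1-p)$ per toppling. Its predictable quadratic variation up to the (a.s.\ finite) stabilization time $\tau_n$ is $2p(1-p)\tau_n$. To control $\tau_n$, I use the second moment $Q_t:=\sum_x x^2\eta_t(x)$. A toppling at $v$ changes $Q$ by $(2v+1)$ times the right indicator plus $(-2v+1)$ times the left indicator, so its conditional mean is exactly $2p$. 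Thus $Q_t-2pt$ is a martingale. Under Theorem~\ref{thm:main1} and the interval structure, $Q_\infty/n^3\to 2\int_0^{1/2}x^2\,dx=\frac1{12}$ in probability. The martingale $Q_{t\wedge\tau_n}-2p(t\wedge\tau_n)$ has increments of size $O(|v|)$. Topplings only occur inside the active region, which has size $O(n)$, so its variance is $O(n^2\E\tau_n)$. Optional stopping then gives $\E\tau_n\le \E Q_\infty/(2p)=O(n^3)$, and so the fluctuation term is $O(n^{5/2})=o(n^3)$ in $L^2$. Consequently $\tau_n/n^3\to\frac1{24p}$ in probability, and
\begin{align*}
\frac{\langle M\rangle_{\tau_n}}{n^3}=\frac{2p(1-p)\tau_n}{n^3}\longrightarrow\frac{1-p}{12}
\end{align*}
in probability.

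Finally, I apply the martingale CLT (e.g.\ McLeish/Hall–Heyde) to the martingale difference array $\xi_k/n^{3/2}$, $k\le\tau_n$. The increments are bounded by $n^{-3/2}$, so the conditional Lindeberg condition is trivial, and the quadratic variation converges in probability to the constant $\frac{1-p}{12}$. This yields $M_{\tau_n}/n^{3/2}\xrightarrow{\mathcal D}\mathcal N(0,\frac{1-p}{12})$. A technical point is that $\tau_n$ is a random stopping time rather than a deterministic horizon. I would handle it by extending the increments past $\tau_n$ by zeros up to time $Cn^3$, or by a random-time-change version of the CLT. This is harmless because $\tau_n/n^3$ concentrates at $\frac{1}{24p}$.

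The main obstacle I expect is the structural input, namely that the final configuration is, up to $O(1)$ defects (or at least $o(\sqrt n)$ defects), an interval of ones. This is needed both to convert $M_\infty$ into $\leftb{n}$ with error $o(n^{3/2})$ and to pin $Q_\infty$ down to $\frac{n^3}{12}(1+o(1))$. Theorem~\ref{thm:main1} alone controls the boundaries only to $o(n)$, which is enough for $Q_\infty$ but not for the first-moment identity. For the latter, I would first try to prove directly that holes cannot form in the interior: a vertex, once occupied, is never emptied below one particle except by a toppling that leaves it with at least one particle. This would give that the number of empty sites in $[\leftb{n},\rightb{n}]$ is at most one.
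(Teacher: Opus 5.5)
Your probabilistic argument is essentially the paper's. It uses the center-of-mass martingale with conditional variance $2p(1-p)$ per toppling. It uses the martingale $Q_t-2pt$, with optional stopping and an $O(n^2\,\E\tau_n)=O(n^5)$ second-moment bound, to get $\tau_n/n^3\to\frac{1}{24p}$. It finishes with a martingale CLT on a deterministic horizon. Your conditional-variance form of the CLT even spares you the paper's extra lemma on the realized sum of squares, and truncating at $Cn^3$ works as well as the paper's horizon $n^5$.

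The gap is the structural input. As you say yourself, it does not follow from Theorem~\ref{thm:main1}. You need it both for $M_\infty=n\leftb{n}+\frac{n(n-1)}{2}+O(n)$ and for your claim that all topplings occur at $|v|=O(n)$. The mechanism you propose for it is false. A site holding exactly two particles that draws $\leftrightarrow$ is emptied, wherever it sits, so occupied sites do get emptied and holes do form away from the rim. Already for $n=2$, the final configuration is $\delta_{-1}+\delta_1$ with probability $p/(2-p)$, which has a hole at the origin. (Were your claim true, it would give no holes at all rather than at most one.)

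What is missing is the fact that holes are created freely but cannot accumulate. The paper proves this as Lemma~\ref{lem:holes}. First, two adjacent holes never coexist: the later-emptied of the two fired $\leftrightarrow$ from exactly two particles and so fed the other. Second, under the leftmost-first toppling order, a second hole in the final configuration would force a descending chain of holes ending in two adjacent ones.

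An order-free alternative is the following invariant: between any two empty sites inside the hull of the support there is always a site carrying at least two particles. It holds vacuously for $n\delta_0$. Every toppling preserves it. When the firing site drops below two particles, each particle it sends either lifts a non-empty neighbor in the same gap to at least two particles, or fills a hole and merges two gaps whose witnesses are untouched. A $\leftrightarrow$ firing that creates a new hole supplies such a witness or merge on each side. In a stable configuration no site has two particles, so the invariant leaves at most one hole. With this, $\rightb{n}-\leftb{n}+1\in\{n,n+1\}$ and every toppled site lies in $[\leftb{n},\rightb{n}]$, and the rest of your proposal goes through.
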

The proofs of Theorem \ref{thm:main1} and \ref{thm:main2} both rely on analyzing the center of mass defined as
{$M_n:=\sum_{x\in\Z}x\cdot\eta^{(n)}(x),$}
and using martingale techniques to show that $M_n$ obeys a law of large numbers and a central limit theorem. The claim then follows by seeing, that in the final stable configuration, at most one toppled vertex will be vacant with $0$ particles placed on it.

\textbf{Outline.} In Section \ref{sec:prelim} we define and introduce all necessary concepts to prove the two main theorems. In Section \ref{sec:lim-shape} we prove Theorem \ref{thm:main1} by analyzing how the center of mass changes during stabilization. Theorem \ref{thm:main1} then immediately follows from Proposition \ref{prop:lln}. Finally, in Section \ref{sec:gaussian} we prove Theorem \ref{thm:main2} by an application of the martingale central limit theorem to the center of mass of the final stable configuration. 
\section{Preliminaries}\label{sec:prelim}
In this section, we define the stochastic sandpile model and state the tools that are used in our proofs later.

\textbf{Stochastic sandpiles on $\Z$.}
A sandpile configuration is a function $\eta: \Z \rightarrow \N$, where for each vertex $v \in \Z$, $\eta(v)$ represents the number of particles on that vertex. A vertex $v$ is called stable if $\eta(v) \leq 1$; otherwise it is unstable. We call the configuration $\eta$ stable if each of its vertices is stable. Unstable vertices can be legally toppled. During a toppling, the toppled vertex sends particles to its neighbours at random. We consider $p$-topplings that are defined in the following manner: Fix the probability $p\in (0,1)$. When site $v\in\Z$ is toppled, it sends a particle to each neighbour with probability $p$, independently of each other.

\textbf{Diaconis-Fulton representation.}
In order to characterize the stabilization of an unstable configuration, we use the stack-wise representation that was introduced by Diaconis-Fulton \cite{DF91}. For each vertex $x \in \Z$, define a stack of toppling instructions $(\xi_{x,i})_{x \in \Z, i \in \N}$, which are sampled independently of each other, where $\xi_{x,i}$ takes values $\{\leftarrow, \rightarrow, \leftrightarrow, \varnothing\}$ with probability
\begin{equation*}
    \label{eq:toppling_probabilities}
    \Prob(\xi_{x,i} = \;\leftarrow) = \Prob(\xi_{x,i} =\; \rightarrow) = p(1-p), \qquad \Prob(\xi_{x,i} = \;\leftrightarrow) = p^2, \qquad \Prob(\xi_{x,i} = \varnothing) = (1-p)^2
\end{equation*}

Here, $\leftarrow$ denotes a toppling that sends exactly one particle to the left, $\rightarrow$ sends exactly one particle to the right, $\leftrightarrow$ sends a particle to the left and right and $\varnothing$ does not change the configuration.

For a given initial configuration $\eta$ with finite support, its stabilization is defined via an iterative procedure. First, set $\eta_0:=\eta$ and let $u_0$ be the constant zero-function on $\Z$, called the odometer function. If all the sites are stable, the procedure terminates. Otherwise, at each discrete time step $t \in \N$, select the leftmost unstable vertex in $\eta_t$ denoted by ${x_t:=\inf\{v\in\Z \mid \eta_t(v)>1\}}$ and perform a toppling by updating the configuration to
$\eta_{t+1} := \eta_t + \xi_{x_t, u_t(x_t)}$
and incrementing the odometer at $x_t$ by $1$ to obtain
$u_{t+1} = u_t + \delta_{x_t},$
where $\delta_{x_t}(x)$ is the function that is 1 if $x=x_t$ and zero otherwise. This process continues until a stable configuration is reached. In the stabilization as described above, we always choose the leftmost unstable vertex to be toppled at each time step. Due to the Abelian property of stochastic sandpiles, the distribution of the final stable configuration does not depend on the order in which we topple our unstable vertices. Indeed, let $x$ and $y$ be unstable vertices. Then, 
$(\eta + \xi_{x,i}) + \xi_{y,j} \sim (\eta + \xi_{y,j}) + \xi_{x,i},$ where $\sim$ denotes that the two quantities have the same distribution.

\textbf{Martingale Central Limit Theorem.} To prove the convergence of the boundary fluctuations, we rely on the martingale central limit theorem. We state here the version used in our paper.
\begin{theorem}[Martingale Central Limit Theorem (\cite{HaHe80}, Theorem 3.2, p. 58)]
    Let $(c_n)_{n \in \N}$ be a sequence with $c_n \uparrow \infty$ as $n \rightarrow \infty$, let  $(\mathcal{F}_{n,i})_{n\in\N,0\leq i\leq c_n}$ be sigma-fields and let $(S_{n,i})_{n\in\N,0\leq i\leq c_n}$ be a zero-mean, square-integrable martingale array with differences $X_{n,i} = S_{n,i} - S_{n,i-1}$ for $1 \leq i \leq c_n$, where it is assumed that $S_{n,0}=0$. Let $C^2$ be a constant. Suppose that as $n \to \infty$:
    \begin{align*}
        \text{1) $\max_{1\leq i \leq c_n} |X_{n,i}| \xrightarrow{\Prob} 0$},&&
        \text{2) $\sum_{1\leq i \leq c_n} X_{n,i}^2 \xrightarrow{\Prob} C^2$},&&
        \text{3) $\E\big[\max_{1\leq i \leq c_n} X_{n,i}^2\big]$ is bounded in $n$}.
    \end{align*}
     Then $\sum_{1\leq i\leq c_n} X_{n,i} \xrightarrow{\mathcal{D}} Z$, where $Z \sim \mathcal{N}(0,C^2)$.
\end{theorem}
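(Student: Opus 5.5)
This is the classical result cited from \cite{HaHe80}, and I would prove it by the standard characteristic-function argument. The goal is to show that for every fixed $t\in\mathbb{R}$
\[
\E\big[e^{itS_{n,c_n}}\big]\to e^{-t^2C^2/2},
\]
and then invoke L\'evy's continuity theorem. The key algebraic tool is the identity $e^{ix}=(1+ix)\exp\!\big(-\tfrac{x^2}{2}+r(x)\big)$, where $|r(x)|\le |x|^3$ for $|x|$ small. Applying it termwise gives
\[
e^{itS_{n,c_n}}=T_n U_n,\qquad T_n:=\prod_{i=1}^{c_n}(1+itX_{n,i}),\qquad U_n:=\exp\Big(-\tfrac{t^2}{2}\sum_i X_{n,i}^2+\sum_i r(tX_{n,i})\Big).
\]

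\textbf{Step 1 (reduction by stopping).} First I would replace $X_{n,i}$ by $X'_{n,i}:=X_{n,i}\mathbbm{1}\{\sum_{j<i}X_{n,j}^2\le C^2+1\}$. The indicator is $\mathcal{F}_{n,i-1}$-measurable, so this is again a martingale difference array. By condition 2), $\Prob(X'_{n,i}\ne X_{n,i}\text{ for some }i)\to 0$, so it suffices to prove the CLT for the primed array. Conditions 1)--3) also carry over, since $|X'|\le|X|$ and the sum of squares agrees with the original with probability tending to one.

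\textbf{Step 2 ($U_n$ converges in probability).} On the event $\max_i|X_{n,i}|\le\delta$ we have
\[
\Big|\sum_i r(tX_{n,i})\Big|\le |t|^3\max_i|X_{n,i}|\sum_iX_{n,i}^2 .
\]
This tends to $0$ in probability by conditions 1) and 2). Together with condition 2) this gives $U_n\xrightarrow{\Prob}e^{-t^2C^2/2}$. Moreover $|U_n|=|e^{itS}|/|T_n|\le 1$, so $U_n$ is bounded.

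\textbf{Step 3 (the product $T_n$).} Conditioning successively on $\mathcal{F}_{n,c_n-1},\dots,\mathcal{F}_{n,0}$ and using $\E[X_{n,i}\mid\mathcal{F}_{n,i-1}]=0$ gives $\E[T_n]=1$. Square-integrability makes this legitimate once $T_n$ is shown to be in $L^2$. We then write
\[
\E\big[e^{itS}\big]-e^{-t^2C^2/2}=\E\big[T_n\big(U_n-e^{-t^2C^2/2}\big)\big].
\]
The right-hand side tends to $0$ provided $(T_n)$ is uniformly integrable, because $U_n-e^{-t^2C^2/2}\to0$ in probability and is bounded.

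\textbf{Main obstacle: uniform integrability of $T_n$.} This is the step I expect to be the real difficulty, and it is exactly where the truncation and condition 3) enter. Let $J$ be the last index at which the truncation still allows $X'$ to be nonzero. Then
\[
|T_n|^2=\prod_i\big(1+t^2X_{n,i}'^2\big)\le \exp\Big(t^2\sum_{i<J}X_{n,i}'^2\Big)\big(1+t^2X_{n,J}'^2\big)\le e^{t^2(C^2+1)}\Big(1+t^2\max_iX_{n,i}^2\Big).
\]
By condition 3) this gives $\sup_n\E|T_n|^2<\infty$, and hence uniform integrability. One must check carefully that the bound $\sum_{i<J}X'^2\le C^2+1$ really holds for the truncated array, and that the expansion of $e^{ix}$ is only needed on the event of probability tending to one where $\max_i|X_{n,i}|$ is small. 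Off that event, everything is bounded by $|T_n|+1$, which is uniformly integrable, so its contribution vanishes. Combining Steps 1--3 yields $\sum_iX_{n,i}\xrightarrow{\mathcal{D}}\mathcal{N}(0,C^2)$.
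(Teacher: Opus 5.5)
Your proposal is correct and is essentially the standard proof from the cited source (Hall and Heyde's argument for Theorem 3.2, after McLeish): the factorization $e^{itS_{n,c_n}}=T_nU_n$ with $|U_n|\le 1$, truncation at $\sum_{j<i}X_{n,j}^2\le C^2+1$, and the bound $|T_n|^2\le e^{t^2(C^2+1)}\bigl(1+t^2\max_i X_{n,i}^2\bigr)$, which together with condition 3) gives uniform integrability. The paper itself gives no proof and only cites this result; your argument never uses the nesting condition $\mathcal{F}_{n,i}\subseteq\mathcal{F}_{n+1,i}$ required in Hall and Heyde's general random-limit version, which is consistent with the paper's statement omitting it for a constant limit $C^2$.
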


\begin{remark}
    In \cite{HaHe80}, the martingale central limit theorem is given in a more general form, where the limit of the sum of squared differences is allowed to be a random variable $\eta^2.$ Since we only need the case where the limit is a constant $C^2$, we have stated the martingale central limit theorem for this special case.
\end{remark}

\section{Limit shape}\label{sec:lim-shape}
In this section, we determine the limit shape of the final stable configuration formed by placing $n \in \N$ particles at the origin. As outlined in the introduction, our proof relies on tracking how the center of mass changes during stabilization. To formalize this, let $\eta_0^{(n)} := n\delta_0$ denote the initial configuration, where $\delta_0$ represents the sandpile configuration given by a single particle placed at the origin.

We consider the sequence of intermediate configurations $(\eta_k^{(n)})_{k \in \N}$ from the stabilization as defined in Section \ref{sec:prelim}: if $\eta_k^{(n)}$ is unstable, we locate its leftmost unstable vertex, and obtain the next configuration by toppling $x_k^{(n)}$. If on the other hand $\eta_k^{(n)}$ is already stable, we simply set $\eta_{k+1}^{(n)} := \eta_k^{(n)}$.

Let $(\mathcal{F}_k^{(n)})_{k \in \N}$ denote the natural filtration generated by the sequence $(\eta_k^{(n)})_{k \in \N}$. Defining the center of mass at time $k \in \N$ as
\begin{align}\label{eq:center-of-mass}\tag{CM} M_k^{(n)} := \sum_{x \in \Z} x \cdot \eta_k^{(n)}(x), 
\end{align}
it follows that $\mathbb{E}[M_{k+1}^{(n)} \mid \mathcal{F}_k^{(n)}] = M_k^{(n)}$. That is, the sequence of centers of mass is a martingale. 

To establish a law of large numbers for the center of mass, we bound the total number of topplings required to stabilize the initial configuration. Define the stabilization time of $n \delta_0$ as
\begin{align}\label{eq:Kn}\tag{ST} K_n := \inf\{k \in \N \mid \eta_k^{(n)} \text{ is stable}\}. 
\end{align}

Notice that $K_n$ is a stopping time with respect to the filtration $(\mathcal{F}_{k}^{(n)})_{k\in\N}$. Furthermore, since topplings are Abelian in the stochastic sandpile model, it follows by induction that the stabilization time is finite almost surely for all $n\in\N$. That is, $K_n$ is an almost surely finite stopping time.

\begin{lemma}\label{lem:stab-time-expectation}
    For the stabilization time $K_n$ defined in (\ref{eq:Kn}), it holds that $\mathbb{E}K_n\leq\frac{n^3}{2p}$.
\end{lemma}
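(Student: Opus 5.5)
The plan is to use a quadratic potential, the moment of inertia. Define
$$Q_k^{(n)}:=\sum_{x\in\Z}x^2\,\eta_k^{(n)}(x).$$
Suppose $\eta_k^{(n)}$ is unstable and the toppled vertex is $x=x_k^{(n)}$. A particle moving from $x$ to $x-1$ changes $Q$ by $(x-1)^2-x^2=-2x+1$. A particle moving from $x$ to $x+1$ changes $Q$ by $2x+1$. Each of these moves happens independently with probability $p$, so
$$\E\big[Q_{k+1}^{(n)}-Q_k^{(n)}\,\big|\,\mathcal{F}_k^{(n)}\big]=p(-2x+1)+p(2x+1)=2p\quad\text{on }\{k<K_n\}.$$
After $K_n$ the configuration is frozen. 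Hence $Q_{k\wedge K_n}^{(n)}-2p\,(k\wedge K_n)$ is a martingale started at $Q_0^{(n)}=0$, since all mass starts at the origin. Integrability is automatic, because up to time $k$ only finitely many bounded moves have happened.

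Optional stopping at the bounded time $k\wedge K_n$ then gives
$$2p\,\E[k\wedge K_n]=\E\big[Q_{k\wedge K_n}^{(n)}\big].$$
Suppose we know that at every time of the stabilization all $n$ particles lie in $[-n,n]$. Then $Q_j^{(n)}\le n\cdot n^2=n^3$ deterministically, so $\E[k\wedge K_n]\le n^3/(2p)$. Letting $k\to\infty$ and applying monotone convergence yields $\E K_n\le \frac{n^3}{2p}$, which is exactly the bound in Lemma~\ref{lem:stab-time-expectation}. The constant $n^3$ matches the crude bound "$n$ particles, each with $x^2\le n^2$", so this is the intended route.

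The main obstacle is the deterministic confinement statement: every particle stays within distance $n$ of the origin throughout the stabilization. Particles only move to nearest neighbours, so the set of sites ever occupied is an interval $[a,b]\ni 0$. A site can pass a particle outward only by toppling, and toppling requires at least two particles there. To push the rightmost occupied site from $m-1$ to $m$, site $m-1$ must hold two particles.

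I would prove by induction an invariant of the following type. At every time, among the sites strictly between the leftmost and rightmost occupied sites, at most one is vacant. This is consistent with the remark in the introduction about at most one vacant toppled vertex. Given the invariant, the occupied region has length at most $n+1$ and contains $0$, so it lies within $[-n,n]$. To check that the invariant is preserved, I would examine the only way a site can become vacant: a $\leftrightarrow$ toppling of a site holding exactly two particles. Such a toppling simultaneously feeds both neighbours, and I would verify that this cannot create a second gap.

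If that invariant proves awkward, the fallback is a weaker counting argument. When a particle first reaches $m>0$, each of the sites $0,1,\dots,m-1$ has toppled. This should force at least roughly $m$ particles to have been "left behind" in the region, giving $m\le n$.
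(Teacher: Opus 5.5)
Your skeleton is exactly the paper's proof. Both use the potential $S_k^{(n)}=\sum_x x^2\eta_k^{(n)}(x)$ with drift $2p\mathbbm{1}_{\{K_n>k\}}$, optional stopping at $\min\{k,K_n\}$, the bound $\mathbb{E}[S^{(n)}_{\min\{k,K_n\}}]\le n^3$, and monotone convergence. The paper simply asserts the $n^3$ bound. You rightly identify confinement to $[-n,n]$ as the real content, but the invariant you propose for it is false.

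Take $n=4$ and let every instruction be $\leftrightarrow$. The leftmost-first rule produces:
\begin{itemize}
\item $\delta_{-1}+2\delta_0+\delta_1$;
\item then $2\delta_{-1}+2\delta_1$;
\item then, after site $-1$ fires, $\delta_{-2}+\delta_0+2\delta_1$;
\item then, after site $1$ fires, $\delta_{-2}+2\delta_0+\delta_2$.
\end{itemize}
The last configuration has two vacant sites $\pm1$ strictly inside the occupied range $[-2,2]$, and this path has probability $p^8>0$. The failing step is precisely the one you planned to check: a site holding exactly two particles fires $\leftrightarrow$ while a non-adjacent hole already exists.

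``At most one hole'' holds only for stable configurations (Lemma~\ref{lem:holes}). During stabilization, extra holes are compensated by excess particles. What is true at all times is that the number of interior holes is at most $1+\sum_x(\eta_t(x)-1)^+$, which is equivalent to the occupied range having at most $n+1$ sites. Your fallback (``roughly $m$ particles left behind'') is not yet an argument, and a ``roughly'' would in any case spoil the constant $n^3$.

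The cleanest repair goes through the final configuration. Let $O_t$ be the set of sites occupied at some time $\le t$. It is an interval containing $0$ and is nondecreasing in $t$. Its endpoints stay occupied, because a site can only be emptied by a $\leftrightarrow$ toppling from exactly two particles, and from an endpoint that would enlarge $O_t$. Hence $O_t\subseteq O_{K_n}=[\leftb{n},\rightb{n}]$. By Lemma~\ref{lem:holes} this interval has at most $n+1$ sites and contains $0$, so $O_t\subseteq[-n,n]$ for all $t$.

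This route needs $K_n<\infty$ almost surely. If you do not want to rely on the paper's remark about that, first use the genuinely time-uniform fact that $O_t$ never contains two adjacent holes (the first step of the proof of Lemma~\ref{lem:holes}). That confines all particles to $[-2n,2n]$, and your own martingale argument then gives $\mathbb{E}K_n\le 2n^3/p<\infty$. Then rerun the argument with the sharp confinement to get $\frac{n^3}{2p}$.
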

\begin{proof}
    Defining the square center of mass as
    \begin{align}\label{eq:square-center}\tag{SCM}S_k^{(n)}:=\sum_{x\in\Z}x^2\cdot\eta_k^{(n)},\end{align}
    it holds that
    $$\mathbb{E}[S_{k+1}^{(n)}\;|\; \mathcal{F}_k^{(n)}]=S_k^{(n)}+2p\mathbbm{1}_{\{K_n>k\}}.$$
    If we now define the sequence $\hat{S}_k^{(n)}:=S_{k}^{(n)}-2p\sum_{l=0}^{k-1}\mathbbm{1}_{\{K_n>l\}},$ then $(\hat{S}_k^{(n)})_{k\in\N}$ is a martingale with respect to the filtration $(\mathcal{F}_k^{(n)})_{k\in\N}$. The optional stopping theorem gives $\mathbb{E}[\hat{S}_{\min\{k,K_n\}}^{(n)}]=0$ for all $k\in\N$, which implies
    $$2p\mathbb{E}\bigg[\sum_{l=0}^{\min\{k,K_n\}-1}\mathbbm{1}_{\{K_n>l\}}\bigg]\leq \mathbb{E}[S_{\min\{k,K_n\}}^{(n)}]\leq n^3,$$
    and which together with the monotone convergence theorem yields
    $$\mathbb{E}K_n=\mathbb{E}\bigg[\lim_{k\rightarrow\infty}\sum_{l=0}^{\min\{k,K_n\}-1}\mathbbm{1}_{\{K_n>l\}}\bigg]=\lim_{k\rightarrow\infty}\mathbb{E}\bigg[\sum_{l=0}^{\min\{k,K_n\}-1}\mathbbm{1}_{\{K_n>l\}}\bigg]\leq \frac{n^3}{2p}.$$
\end{proof}
Using Lemma \ref{lem:stab-time-expectation}, we can derive an upper bound on the expected square of the center of mass of the final stable configuration. The center of mass of the final stable configuration is $M_{K_n}^{(n)}.$
\begin{lemma}\label{lem:square-center-mass}
    It holds that $\mathbb{E}\Big[\left(M_{K_n}^{(n)}\right)^2\Big]\leq (1-p)n^3$.
\end{lemma}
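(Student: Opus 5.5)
Since $(M_k^{(n)})_{k\in\N}$ is a martingale with $M_0^{(n)}=0$, I will compute its quadratic variation and combine it with the bound $\mathbb{E}K_n\le \frac{n^3}{2p}$ from Lemma \ref{lem:stab-time-expectation}.

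\textbf{Conditional variance of one step.} On $\{K_n>k\}$ the increment $M_{k+1}^{(n)}-M_k^{(n)}$ depends only on the instruction $\xi$ used at the toppled vertex $x_k^{(n)}$, not on the position of that vertex. A particle moving from $x$ to $x-1$ changes $M$ by $-1$, and one moving to $x+1$ changes it by $+1$. Hence the increment is
\begin{itemize}[label={}]
\item $-1$ for $\leftarrow$, with probability $p(1-p)$;
\item $+1$ for $\rightarrow$, with probability $p(1-p)$;
\item $0$ for $\leftrightarrow$ or $\varnothing$.
\end{itemize}
Therefore
$$\mathbb{E}\big[(M_{k+1}^{(n)}-M_k^{(n)})^2\mid\mathcal{F}_k^{(n)}\big]=2p(1-p)\mathbbm{1}_{\{K_n>k\}}.$$
On $\{K_n\le k\}$ the increment is $0$.

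\textbf{Orthogonality of increments.} By the martingale property the increments are orthogonal. For every fixed $k$ this gives
$$\mathbb{E}\big[(M_{\min\{k,K_n\}}^{(n)})^2\big]=\sum_{l=0}^{k-1}\mathbb{E}\big[(M_{l+1}^{(n)}-M_l^{(n)})^2\big]=2p(1-p)\,\mathbb{E}\big[\min\{k,K_n\}\big]\le 2p(1-p)\,\mathbb{E}K_n\le (1-p)n^3 .$$
Square integrability at each finite $k$ is automatic, since $|M_l^{(n)}|\le nl$.

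\textbf{Passing to the limit.} It remains to let $k\to\infty$. Because $K_n<\infty$ almost surely, $M_{\min\{k,K_n\}}^{(n)}\to M_{K_n}^{(n)}$ almost surely. Fatou's lemma then yields $\mathbb{E}\big[(M_{K_n}^{(n)})^2\big]\le\liminf_k \mathbb{E}\big[(M_{\min\{k,K_n\}}^{(n)})^2\big]\le(1-p)n^3$.

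The only delicate point is this passage to the limit, and Fatou handles it without needing uniform integrability. The inequality direction is all the lemma requires. Equality would also follow from $L^2$-boundedness of the stopped martingale, but it is not needed here.
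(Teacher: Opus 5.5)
Your proof is correct and follows essentially the same route as the paper: you compute the one-step conditional variance $2p(1-p)\mathbbm{1}_{\{K_n>k\}}$, derive the identity $\mathbb{E}\big[(M^{(n)}_{\min\{k,K_n\}})^2\big]=2p(1-p)\,\mathbb{E}[\min\{k,K_n\}]$ at bounded times, and conclude with Lemma~\ref{lem:stab-time-expectation} and Fatou's lemma. The only cosmetic difference is that you obtain that identity from orthogonality of martingale increments, whereas the paper applies optional stopping to the compensated martingale $(M_k^{(n)})^2-2p(1-p)\sum_{l<k}\mathbbm{1}_{\{K_n>l\}}$; this amounts to the same computation.
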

\begin{proof}
    We have
    \begin{equation}
    \label{eq:squared_difference_CM}
    \mathbb{E}\left[\left(M_{k+1}^{(n)}-M_k^{(n)}\right)^2\;|\; \mathcal{F}_k^{(n)}\right]=2p(1-p)\mathbbm{1}_{\{K_n>k\}}
    \end{equation}
    and
    \begin{align*}
        \mathbb{E}\left[\left(M_{k+1}^{(n)}\right)^2\;|\;\mathcal{F}_k^{(n)}\right]=\left(M_k^{(n)}\right)^2+\mathbb{E}\Big[\big(&M_{k+1}^{(n)}-M_k^{(n)}\big)^2\;|\; \mathcal{F}_k^{(n)}\Big] \\ &+2\mathbb{E}\left[\left(M_{k+1}^{(n)}-M_k^{(n)}\right)M_k^{(n)}\;|\;\mathcal{F}_k^{(n)}\right].
    \end{align*}
    Since the difference $M_{k+1}^{(n)}-M_k^{(n)}$ is independent of $M_k^{(n)}$ and has zero expectation, we obtain
    $$\mathbb{E}\left[\left(M_{k+1}^{(n)}\right)^2\;|\;\mathcal{F}_k^{(n)}\right]=\left(M_k^{(n)}\right)^2+2p(1-p)\mathbbm{1}_{\{K_n>k\}}.$$
    Set $\hat{M}_k^{(n)}:=\left(M_k^ {(n)}\right)^2-2p(1-p)\sum_{l=0}^{k-1}\mathbbm{1}_{\{K_n>l\}}$. Then $(\hat{M}_k^{(n)})_{k\in\N}$ is a martingale with respect to the filtration $(\mathcal{F}_k^{(n)})_{k\in\N}$. Similarly to the proof of Lemma \ref{lem:stab-time-expectation}, by the optional stopping theorem we obtain $$\mathbb{E}\left[\left(M_{\min\{k,K_n\}}^{(n)}\right)^2\right]=2p(1-p)\mathbb{E}\bigg[\sum_{l=0}^{\min\{k,K_n\}-1}\mathbbm{1}_{\{K_n>l\}}\bigg],$$
    for all $k\in\N$. The claim now follows from Fatou's Lemma together with Lemma \ref{lem:stab-time-expectation}.
\end{proof}
Recall $\leftb{n}$ and $\rightb{n}$ as defined in \eqref{eq:boundary}. We establish next a result about the placement of holes within the final stable configuration.
\begin{lemma}\label{lem:holes}
    For the final stable configuration $\eta_{K_n}^{(n)}$ it holds that either
    $$\eta_{K_n}^{(n)}=\mathbbm{1}_{[\leftb{n},\rightb{n}]\cap\Z},$$
    or there exists a vertex $\mathsf{h}_n\in[\leftb{n},\rightb{n}]\cap\Z$ such that
    $$\eta_{K_n}^{(n)}=\mathbbm{1}_{[\leftb{n},\rightb{n}]\cap\Z}-\delta_{\mathsf{h}_n}.$$
    That is, the final configuration has at most one vertex with $0$ particles within its limit shape.
\end{lemma}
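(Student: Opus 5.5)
The plan is to prove a stronger statement by induction along the stabilization sequence $(\eta_k^{(n)})_{k}$. For each $k$ let $I_k$ be the set of vertices occupied at some time $\le k$. Since particles only move by one step and start at $0$, $I_k$ is an integer interval containing $0$. I would show the following invariant:
\begin{equation*}
\text{(I)}\qquad \sum_{y=a}^{b}\eta_k^{(n)}(y)\;\ge\; b-a \qquad\text{for all } a\le b \text{ with } a,b\in I_k .
\end{equation*}
Once (I) holds at time $K_n$, the lemma follows from stability. If there were two vacant vertices $h_1<h_2$ in $[\leftb{n},\rightb{n}]$, then every vertex strictly between them carries at most one particle. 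Hence $\sum_{y=h_1}^{h_2}\eta_{K_n}^{(n)}(y)\le h_2-h_1-1$, which contradicts (I). Moreover $[\leftb{n},\rightb{n}]\subseteq I_{K_n}$, and all values lie in $\{0,1\}$. So the final configuration is $\mathbbm{1}_{[\leftb{n},\rightb{n}]\cap\Z}$ minus at most one $\delta_{\mathsf h_n}$.

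For the base case, $\eta_0^{(n)}=n\delta_0$ and $I_0=\{0\}$, so (I) reduces to $n\ge 0$. For the inductive step, suppose the vertex $x=x_k^{(n)}$ topples with $\eta_k^{(n)}(x)\ge 2$. I would split the instruction into single-particle moves. The instructions $\varnothing$, $\leftarrow$ and $\rightarrow$ are at most one move. For $\leftrightarrow$, I would first check the singleton interval $[x,x]$ directly, since $\eta(x)-2\ge 0$. For every other interval containing $x$ and at least one neighbour, one of the two emitted particles stays inside the interval, so the net change is at most $-1$. This is the same as for a single move.

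Now consider a move of one particle from $x$ to $x+1$; the case $x\to x-1$ is symmetric. The only interval sums that decrease are those of $[a,b]$ with $b=x$, and they decrease by exactly $1$. Before the move,
\begin{equation*}
\sum_{y=a}^{x}\eta_k^{(n)}(y)=\sum_{y=a}^{x-1}\eta_k^{(n)}(y)+\eta_k^{(n)}(x)\ge (x-1-a)+2=x-a+1
\end{equation*}
for $a<x$, using (I) on $[a,x-1]$. For $a=x$ the bound is just $\eta_k^{(n)}(x)\ge 2$. Hence after the move the sum is still at least $x-a$.

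If the move enlarges the occupied interval to $I_{k+1}=I_k\cup\{x+1\}$, the new site carries at least one particle. For $a\in I_k$ we get $\sum_{y=a}^{x+1}\eta_{k+1}^{(n)}(y)\ge \sum_{y=a}^{x}\eta_{k+1}^{(n)}(y)+1\ge (x-a)+1$. This uses the already-established bound on $[a,x]$, so (I) is preserved.

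The main obstacle is guessing the right invariant. The naive statement ``at most one hole at every time'' is not obviously preserved by a $\leftrightarrow$ toppling at a site holding exactly $2$ particles. The interval-sum inequality (I) avoids this, because it does not require intermediate sites to hold at most one particle. The constraint to at most one hole only appears once stability is imposed at time $K_n$. The remaining care is bookkeeping: handling the simultaneous double move of $\leftrightarrow$, and the extension of $I_k$ at the boundary.
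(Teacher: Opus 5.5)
Your proof is correct, and it takes a genuinely different route from the paper's.

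\textbf{The paper's route.} The paper argues dynamically. First, it looks at whichever of two adjacent vacant sites toppled last, and concludes that two adjacent holes never occur during stabilization. Second, for two non-adjacent final holes $v<w$, it uses the leftmost-first toppling order: at the last toppling of $w$, everything to its left is stable and $w$ fires a particle leftwards. This produces a hole strictly between $v$ and $w$, and the paper iterates (``by repeating this argument'') until two adjacent holes appear.

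\textbf{Your route.} You replace both steps with the single deterministic invariant $\sum_{y=a}^{b}\eta_k^{(n)}(y)\ge b-a$ on the explored interval $I_k$, verified by a one-step induction. This gains several things:
\begin{itemize}
\item It never uses the leftmost rule, the Abelian property, or the instruction probabilities. It only needs that the firing site holds at least two particles and sends at most one particle to each neighbour. So it holds pathwise for every legal toppling order.
\item It contains the paper's ``no two adjacent holes'' claim as the case $b=a+1$. It also makes precise that holes are only counted inside $I_k$, which the paper leaves implicit.
\item It tolerates intermediate configurations with several holes separated by unstable sites, such as $\delta_{-2}+2\delta_0+\delta_2$ arising from $4\delta_0$.
\item It turns the paper's informal descent, whose termination is left implicit, into a clean induction.
\end{itemize}
The paper's argument is shorter and more intuitive, but it leans on the specific toppling order.

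\textbf{Two small bookkeeping points.}
\begin{itemize}
\item In the $\leftrightarrow$ case you mean that interval sums drop by \emph{at most} one (net change $\ge -1$). Intervals containing both $x-1$ and $x+1$ are unchanged, so the change is not ``at most $-1$''.
\item When $I_k=\{x\}$, a $\leftrightarrow$ instruction extends $I_k$ on both sides. This is only possible at the origin before any particle has moved. Then the interval $[x-1,x+1]$ has neither endpoint in $I_k$, so your extension step ``for $a\in I_k$'' does not literally cover it. It is handled by performing the two extensions one after the other, or directly, since its sum equals $\eta_k^{(n)}(x)\ge 2$.
\end{itemize}
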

\begin{proof}
    We first show that during the stabilization process, there can never be two adjacent holes. Assume that for some $k\in\N$ and $v\in\Z$ it holds that $\eta_k^{(n)}(v)=\eta_k^{(n)}(v+1)=0$. Without loss of generality, we assume that $v$ was the last of the two vertices to topple before time $k$ (the argument for $v+1$ works analogously). Since it has $0$ particles, it must have had $2$ particles before its last toppling, and it sent particles to both its neighbours. However, in that case $v+1$ received a particle, and cannot have $0$ particles at time $k$, which is a contradiction.

    It thus holds, that during the stabilization, we can never have two adjacent holes. Assume that the final stable configuration has two holes $v,w\in\Z$ with $v<w-1$, and let $\tau_w$ be the last time $w$ toppled. Since we always topple the leftmost unstable vertex, at time $\tau_w$ everything to the left of $w$ was stable. Furthermore, since $w$ is a hole, the toppling at time $\tau_w$ send a particle to the left. Since the particle that was sent to the left did not induce a sequence of topplings that filled the hole at $w$ nor at $v$, there must have been another hole $v'$ between $v$ and $w$ that absorbed the additional particle. However, by now considering the last time $t_{v'}$ the vertex $v'$ toppled before time $\tau_w$, we can repeat the argument and find another hole $v''$ between $v$ and $v'$. By repeating this argument, we must eventually find two adjacent holes during our stabilization process, which is not possible.
\end{proof}
\begin{proposition}\label{prop:lln}
    For $\leftb{n}=\inf\{v\in\Z\;|\; \eta_{K_n}^{(n)}(v)>0\}$ and $\rightb{n}=\sup\{v\in\Z\;|\;\eta_{K_n}^{(n)}(v)>0\}$, it holds that
    \begin{align*}
        \frac{\leftb{n}}{n}\xrightarrow[n\rightarrow\infty]{\mathbb{P}}-\frac{1}{2}&&\text{and}&&\frac{\rightb{n}}{n}\xrightarrow[n\rightarrow\infty]{\mathbb{P}}\frac{1}{2}.
    \end{align*}
\end{proposition}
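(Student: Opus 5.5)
By Lemma \ref{lem:holes}, the final configuration is determined up to one hole by the two boundaries. Particle number is conserved (no particle is ever lost on $\Z$), so $n=\sum_x \eta_{K_n}^{(n)}(x)$, which gives
$$\rightb{n}-\leftb{n}+1-\mathbbm{1}_{\{\text{hole}\}} = n .$$
In words, the length of the support is $n$ or $n+1$, deterministically.

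The plan is to combine this with the center of mass. The center of mass is an arithmetic sum over the interval minus the possible hole:
$$M_{K_n}^{(n)}=\sum_{x=\leftb{n}}^{\rightb{n}} x-\mathsf{h}_n\mathbbm{1}_{\{\text{hole}\}}=\frac{(\rightb{n}-\leftb{n}+1)(\leftb{n}+\rightb{n})}{2}-\mathsf{h}_n\mathbbm{1}_{\{\text{hole}\}}.$$
Since $\rightb{n}-\leftb{n}+1\in\{n,n+1\}$, we have $\frac{\leftb{n}+\rightb{n}}{2}\approx M_{K_n}^{(n)}/n$. The error term comes from the hole, with $|\mathsf{h}_n|\le \max(|\leftb{n}|,|\rightb{n}|)$, and from the factor $n+1$ versus $n$. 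Write $c_n:=(\leftb{n}+\rightb{n})/2$. The interval has width $\approx n$, so $|\mathsf{h}_n|\le |c_n|+n$. Solving the identity gives
$$c_n\cdot L_n = M_{K_n}^{(n)}+\mathsf{h}_n\mathbbm{1}_{\{\text{hole}\}}, \qquad L_n\in\{n,n+1\}.$$
Hence $|c_n|\,n\le |M_{K_n}^{(n)}|+|c_n|+n$, so $|c_n|\le (|M_{K_n}^{(n)}|+n)/(n-1)$.

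Next I would invoke Lemma \ref{lem:square-center-mass} together with Chebyshev/Markov. Since $\mathbb{E}[(M_{K_n}^{(n)})^2]\le (1-p)n^3$, we get
$$\mathbb{P}\big(|M_{K_n}^{(n)}|>\varepsilon n^2\big)\le \frac{(1-p)n^3}{\varepsilon^2n^4}\to0.$$
Therefore $c_n/n\to0$ in probability. Finally, $\rightb{n}=c_n+\frac{\rightb{n}-\leftb{n}}{2}=c_n+\frac{n}{2}+O(1)$ and $\leftb{n}=c_n-\frac n2+O(1)$. Dividing by $n$ yields the claim.

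The only delicate step is the handling of the hole term in the center-of-mass identity. The bound $|\mathsf{h}_n|\le |c_n|+n$ absorbs it, since its contribution to $c_n$ is lower order after dividing by $n$. Everything else is routine. I would also double-check conservation of mass: on $\Z$ there is no sink, so it holds, and stability forces values in $\{0,1\}$, as used in Lemma \ref{lem:holes}.
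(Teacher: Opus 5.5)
Your proof is correct and follows essentially the same route as the paper: the center-of-mass identity from Lemma \ref{lem:holes}, Chebyshev's inequality with Lemma \ref{lem:square-center-mass}, and the deterministic width $\rightb{n}-\leftb{n}+1\in\{n,n+1\}$ coming from mass conservation. The only difference is minor: you absorb the hole via $|\mathsf{h}_n|\le|c_n|+n$ and solve for $|c_n|$, whereas the paper uses $|\mathsf{h}_n|\le n$ directly, which implicitly relies on the final support containing the origin and which your version does not need.
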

\begin{proof}
    By Lemma \ref{lem:holes}, the final stable configuration $\eta_{K_n}^{(n)}$ is constantly $1$ on the interval of vertices from $\leftb{n}$ to $\rightb{n}$, with at most a single vertex in this interval being vacant; denote the position of this vacant vertex with $0$ particles by $\mathsf{h}_n$ (if there is no such vertex, set $\mathsf{h}_n=0$). Then
    $$M_{K_n}^{(n)}=-\mathsf{h}_n+\sum_{i=\leftb{n}}^{\rightb{n}}i=-\mathsf{h}_n+\frac{(\leftb{n}+\rightb{n})(\rightb{n}-\leftb{n}+1)}{2},$$
    and rearranging gives
    $$\frac{\leftb{n}+\rightb{n}}{n}=\frac{2M_{K_n}^{(n)}}{n(\rightb{n}-\leftb{n}+1)}+\frac{2\mathsf{h}_n}{n(\rightb{n}-\leftb{n}+1)}.$$
    It holds that $|\mathsf{h}_n|\leq n$ and $(\rightb{n}-\leftb{n}+1)\geq n$, which implies that
    $\frac{2\mathsf{h}_n}{n(\rightb{n}-\leftb{n}+1)}$ goes to $0$ almost surely.
    For the remaining summand it follows for every $\varepsilon>0$ that
    $$\mathbb{P}\left(\left|\frac{2M_{K_n}^{(n)}}{n(\rightb{n}-\leftb{n}+1)}\right|>\varepsilon\right)\leq \mathbb{P}\left(|M_{K_n}^{(n)}|>\frac{\varepsilon n^2}{2}\right)\leq\frac{4\mathbb{E}\left[\left(M_{K_n}^{(n)}\right)^2\right]}{\varepsilon^2 n^4}\leq \frac{4(1-p)}{\varepsilon^2 n},$$
    where we have used Lemma \ref{lem:square-center-mass}. Thus $(\leftb{n}+\rightb{n})/n$ goes to $0$ in probability. Combining this with the fact that $(-\leftb{n}+\rightb{n})/n$ goes to $1$ almost surely, the claim follows.
\end{proof}
\begin{proof}[Proof of Theorem \ref{thm:main1}]
    For $\varepsilon>0$ it holds
    \begin{align*}
    \Big\{\Big[-\frac{n}{2}(1-\varepsilon),\frac{n}{2}(1-\varepsilon)\Big]\cap\Z & \subseteq \mathcal{S}_n\subseteq \Big[-\frac{n}{2}(1+\varepsilon),\frac{n}{2}(1+\varepsilon)\Big]\cap\Z\Big\}\\&=\Big\{\leftb{n}\in\Big[-\frac{n}{2}(1+\varepsilon),-\frac{n}{2}(1-\varepsilon)\Big]\;,\;\rightb{n}\in\Big[\frac{n}{2}(1-\varepsilon),\frac{n}{2}(1+\varepsilon)\Big]\Big\}.
    \end{align*}
    By Proposition \ref{prop:lln}, the probability of the event on the right-hand side above goes to $1$, which proves the theorem.
\end{proof}
\section{Boundary fluctuations}\label{sec:gaussian}
This section is devoted to the proof of Theorem \ref{thm:main2}. We show that the boundary fluctuations of the limit shape are of order $\sqrt{n}$ and converge in distribution to a Gaussian random variable under appropriate rescaling. The proof is based on the martingale central limit theorem applied to the center of mass martingale from (\ref{eq:center-of-mass}).
We first show that the stabilization time $K_n$ as defined in \eqref{eq:Kn} for $n$ particles at the origin obeys a weak law of large numbers.
\begin{lemma}\label{lem:weak-law-for-Kn}
    The following convergence in probability holds
    $$\frac{K_n}{n^3}\xrightarrow[n\rightarrow\infty]{\mathbb{P}}\frac{1}{24p}\hspace{0.5cm}\text{and}\hspace{0.5cm}\frac{S_{K_n}^{(n)}}{n^3}\xrightarrow[n\rightarrow\infty]{\mathbb{P}}\frac{1}{12}.$$
\end{lemma}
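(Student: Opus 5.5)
The plan is to prove the second convergence first and deduce the first from it via the martingale $\hat S_k^{(n)}$ from the proof of Lemma~\ref{lem:stab-time-expectation}.

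\textbf{Step 1: the limit of $S_{K_n}^{(n)}/n^3$.} By Lemma~\ref{lem:holes}, the final configuration is $\mathbbm{1}_{[\leftb{n},\rightb{n}]\cap\Z}$, possibly minus one hole $\delta_{\mathsf{h}_n}$. Hence
\[
S_{K_n}^{(n)}=\sum_{i=\leftb{n}}^{\rightb{n}} i^2-\mathsf{h}_n^2 .
\]
Write $\leftb{n}=-\tfrac n2+a_n$ and $\rightb{n}=\tfrac n2+b_n$. Proposition~\ref{prop:lln} gives $a_n/n,\,b_n/n\to0$ in probability. Since $\rightb{n}-\leftb{n}+1\in\{n,n+1\}$ and $|\mathsf{h}_n|\le n+1$, we have $\mathsf{h}_n^2/n^3\to0$ almost surely. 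Moreover $\sum_{i=\leftb{n}}^{\rightb{n}}i^2=\tfrac13(\rightb{n}^3-\leftb{n}^3)+O(n^2)$, which equals $\tfrac{n^3}{12}+O\big(n^2(|a_n|+|b_n|)\big)+O(n^2)$. Dividing by $n^3$ gives $S_{K_n}^{(n)}/n^3\to\tfrac1{12}$ in probability. This is the easy half. Note that since $S_{K_n}^{(n)}\le (n+1)^3$ deterministically, it also converges in $L^1$, giving $\E S_{K_n}^{(n)}/n^3\to 1/12$.

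\textbf{Step 2: the limit of $K_n/n^3$.} The proof of Lemma~\ref{lem:stab-time-expectation} shows that $S_{k\wedge K_n}^{(n)}-2p\,(k\wedge K_n)$ is a martingale. To show that $S_{K_n}^{(n)}-2pK_n$ is small compared to $n^3$, I would control its quadratic variation. The increment of $S$ at a toppling of $x$ is $-x^2+(x\pm1)^2$ for each particle sent, so it is bounded by $O(|x|+1)$. Throughout the stabilization the support stays within $[-n,n]$, since no particle can be farther out than $n$ sites (the configuration always has at most $n$ particles arranged so that the support is connected through toppled sites). So each increment of the martingale is $O(n)$ and has conditional variance $O(n^2)$. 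This gives
\[
\E\big[(S_{K_n}^{(n)}-2pK_n)^2\big]\le C n^2\,\E K_n\le C' n^5
\]
by Lemma~\ref{lem:stab-time-expectation}, using optional stopping and Fatou as in Lemma~\ref{lem:square-center-mass}. Chebyshev's inequality then gives $(S_{K_n}^{(n)}-2pK_n)/n^3\to0$ in probability, since $n^5/n^6\to0$. Combined with Step 1 this yields $K_n/n^3\to\tfrac1{24p}$.

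\textbf{Main obstacle.} The delicate point is justifying the bound $|x|\le Cn$ on toppling locations throughout the process, and hence the $O(n)$ increment bound. I would argue it as follows: every site $x$ that has ever toppled has held at least two particles, and the sites between $0$ and $x$ that have toppled each retain at least one particle or are isolated holes. A crude argument of this kind, via the no-adjacent-holes observation in the proof of Lemma~\ref{lem:holes}, gives $|x|\le 2n$, which is enough. Alternatively, one can avoid pathwise bounds by computing $\E[(\Delta S)^2\mid\mathcal F_k]\le C(x_k^2+1)$ and bounding $\E\sum_{k<K_n}x_k^2$ directly, again using this support bound.
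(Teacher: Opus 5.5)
Your proposal is correct and follows the paper's proof essentially step for step. The limit of $S_{K_n}^{(n)}/n^3$ comes from the one-hole structure in Lemma~\ref{lem:holes} together with Proposition~\ref{prop:lln}. The limit $K_n/n^3\to\frac{1}{24p}$ then follows by bounding the conditional variance of each increment of $\hat S_k^{(n)}$ by $Cn^2$, which gives $\E[(\hat S_{K_n}^{(n)})^2]\le Cn^2\,\E K_n=O(n^5)$, and applying Chebyshev. Your no-adjacent-holes argument that toppled sites stay at distance $O(n)$ from the origin, and your use of Fatou to extend optional stopping to $K_n$, supply details the paper only asserts.
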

\begin{proof}
    We first prove the convergence of $S_{K_n}^{(n)}$ divided by $n^3$. Recall that the final stable configuration can be described by its left and right boundary $\leftb{n}$ and $\rightb{n}$, as well as at most one vertex within the interval $[\leftb{n},\rightb{n}]$ with $0$ particles, which we will denote by $\mathsf{h}_n$. We then have
    $S_{K_n}^{(n)}=\sum_{i=1}^{\rightb{n}}i^2+\sum_{i=1}^{|\leftb{n}|}i^2-\mathsf{h}_n^2.$
    Since $\sum_{i=1}^m i^2=\frac{m^3}{3}+\frac{m^2}{2}+\frac{m}{6},$ we have
    $$\frac{1}{n^3}\sum_{i=1}^{\rightb{n}}i^2=\frac{\rightb{n}^3}{3n^3}+\frac{\rightb{n}^2}{2n^3}+\frac{\rightb{n}}{6n^3}.$$
    Because $\rightb{n}\leq n+1$, the lower order terms in the previous expression go to $0$, thus
    $$\frac{1}{n^3}\sum_{i=1}^{\rightb{n}}i^2\xrightarrow[n\rightarrow\infty]{\mathbb{P}}\frac{1}{3}\left(\frac{1}{2}\right)^3=\frac{1}{24}.$$
    The same reasoning works for $\sum_{i=1}^{|\leftb{n}|}i^2$. Since $|\mathsf{h}_n|\leq n$, we obtain $\frac{S_{K_n}^{(n)}}{n^3}\xrightarrow[n\rightarrow\infty]{\mathbb{P}}\frac{1}{12}.$

    For the second convergence, consider
    $\hat{S}_k^{(n)}:=S_{k}^{(n)}-2p\sum_{l=0}^{k-1}\mathbbm{1}_{\{K_n>l\}}$, as in the proof of Lemma \ref{lem:stab-time-expectation}. Then
    $$\mathbb{E}\left[\left(\hat{S}_{k+1}^{(n)}-\hat{S}_k^{(n)}\right)^2|\;\mathcal{F}_{k}^{(n)}\right]\leq \mathbb{E}\left[\left(S_{k+1}^{(n)}-S_k^{(n)}\right)^2|\;\mathcal{F}_{k}^{(n)}\right]+2p\mathbb{E}\left[\left|S_{k+1}^{(n)}-S_k^{(n)}\right||\;\mathcal{F}_{k}^{(n)}\right]+4p^2.$$
    If at time step $k$, the configuration topples at vertex $v$, then the change from $S_k^{(n)}$ to $S_{k+1}^{(n)}$ is bounded by $2|v|+1$. Since all toppled vertices must have absolute value less than or equal to $n$, the upper bound
    $$\mathbb{E}\left[\left(\hat{S}_{k+1}^{(n)}-\hat{S}_k^{(n)}\right)^2|\;\mathcal{F}_{k}^{(n)}\right]\leq (2n+1)^2+2p(2n+1)+4p^2<Cn^2$$
    follows, where $C>0$ is some constant. Since $(\hat{S}_{k}^{(n)})_{k\in\N}$ is a martingale, so is the sequence $((\hat{S}_{k}^{(n)})^2-\sum_{i=1}^k\mathbb{E}[(\hat{S}_{i}^{(n)}-\hat{S}_{i-1}^{(n)})^2|\;\mathcal{F}_{i-1}^{(n)}])_{k\in\N}$. Thus, the optional stopping theorem implies
    $$\mathbb{E}\left[\left(\hat{S}_{K_n}^{(n)}\right)^2\right]=\mathbb{E}\left[\sum_{i=1}^{K_n}\mathbb{E}\left[\left(\hat{S}_{i}^{(n)}-\hat{S}_{i-1}^{(n)}\right)^2|\;\mathcal{F}_{i-1}^{(n)}\right]\right]\leq Cn^2\mathbb{E}[K_n]\leq \frac{C}{2p}n^5,$$
    which together with Markov's inequality gives that
    $\frac{\hat{S}_{K_n}^{(n)}}{n^3}$ converges to $0$ in probability.
    As by definition it holds that
    $\hat{S}_{K_n}^{(n)}=S_{K_n}^{(n)}-2pK_n,$
    we thus obtain
    $\frac{K_n}{n^3}\xrightarrow[n\rightarrow\infty]{\mathbb{P}}\frac{1}{24p}.$
\end{proof}
Since we want to apply the martingale central limit theorem to the center of mass martingale, we still need to show that the sum of squared differences of the center of mass martingale converges in probability after proper rescaling.
\begin{lemma}\label{lem:squared-difference-conv}
    For the center of mass martingales it holds that
    $$\frac{1}{n^3}\sum_{i=1}^{n^5}\left(M_i^{(n)}-M_{i-1}^{(n)}\right)^2\xrightarrow[n\rightarrow\infty]{\mathbb{P}}\frac{1-p}{12}.$$
\end{lemma}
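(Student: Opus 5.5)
Since the configuration is frozen after time $K_n$, every increment $M_i^{(n)}-M_{i-1}^{(n)}$ with $i>K_n$ vanishes. On the event $\{K_n\le n^5\}$ the sum in the statement therefore equals $Q_n:=\sum_{i=1}^{K_n}(M_i^{(n)}-M_{i-1}^{(n)})^2$. By Lemma \ref{lem:stab-time-expectation} and Markov's inequality, $\mathbb{P}(K_n>n^5)\le \frac{1}{2pn^2}\to0$. It thus suffices to prove $Q_n/n^3\to\frac{1-p}{12}$ in probability.

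I decompose $Q_n$ into its predictable compensator plus a martingale. By \eqref{eq:squared_difference_CM}, the conditional expectation of each squared increment given $\mathcal{F}_{i-1}^{(n)}$ is $2p(1-p)\mathbbm{1}_{\{K_n>i-1\}}$, so the compensator is $\sum_{i=1}^{K_n}2p(1-p)=2p(1-p)K_n$. By Lemma \ref{lem:weak-law-for-Kn}, $2p(1-p)K_n/n^3\to 2p(1-p)\cdot\frac{1}{24p}=\frac{1-p}{12}$ in probability, which is exactly the claimed limit.

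It remains to show that $N_k:=\sum_{i=1}^{\min\{k,K_n\}}\big[(M_i^{(n)}-M_{i-1}^{(n)})^2-2p(1-p)\big]$ satisfies $N_{K_n}/n^3\to0$ in probability. The increments of $M$ lie in $\{-1,0,1\}$, because a toppling moves at most one particle net to the left or right; the two-sided toppling $\leftrightarrow$ leaves $M$ unchanged. Hence each martingale increment of $N$ is bounded by $1$, and its conditional second moment is at most $\mathbbm{1}_{\{K_n>i-1\}}$. By orthogonality of increments together with optional stopping (or Fatou applied to $N_{\min\{k,K_n\}}$, as in the proof of Lemma \ref{lem:square-center-mass}), $\mathbb{E}[N_{K_n}^2]\le \mathbb{E}K_n\le \frac{n^3}{2p}$. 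Chebyshev's inequality then gives $\mathbb{P}(|N_{K_n}|>\varepsilon n^3)\le \frac{1}{2p\varepsilon^2n^3}\to0$.

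Combining the three pieces (the truncation at $n^5$, the convergence of the compensator, and the vanishing martingale term) yields the lemma. The only step that needs care is justifying optional stopping for the unbounded stopping time $K_n$. This is handled exactly as in the earlier lemmas: apply it at $\min\{k,K_n\}$, then let $k\to\infty$, using that $\mathbb{E}K_n<\infty$ and that the increments are bounded.
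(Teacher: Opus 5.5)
Your proof is correct and is essentially the paper's argument: your $N_k$ coincides with the paper's martingale $W_k^{(n)}$ (the summands with $i>K_n$ vanish), and the limit comes from the compensator $2p(1-p)K_n$ via Lemma \ref{lem:weak-law-for-Kn}. The only difference is where the martingale is evaluated: the paper uses the deterministic time $n^5$ and the crude bound $\mathbb{E}[(W_{n^5}^{(n)})^2]\le 4n^5$, which avoids both optional stopping and your separate truncation step, whereas you stop at $K_n$ and get the sharper bound $\mathbb{E}[N_{K_n}^2]\le\mathbb{E}K_n$.
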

\begin{proof}
    Recalling equation \eqref{eq:squared_difference_CM}, the sequence $(W_k^{(n)})_{k\in\N}$ defined as
    $$W_k^{(n)}:=\sum_{i=1}^k\left(\left(M_i^{(n)}-M_{i-1}^{(n)}\right)^2-2p(1-p)\mathbbm{1}_{\{K_n>i-1\}}\right)$$
    is a martingale. For the squared differences we have
    $$\left(W_{k+1}^{(n)}-W_k^{(n)}\right)^2=\left(\left(M_{k+1}^{(n)}-M_k^{(n)}\right)^2-2p(1-p)\mathbbm{1}_{\{K_n>k\}}\right)^2\leq4,$$
    thus for every $k\in\N$ it holds
    $\mathbb{E}\left[\left(W_{k}^{(n)}\right)^2\right]\leq 4k.$
    Markov's inequality yields that
    $W_{n^5}^{(n)}/n^3$ converges to $0$ in probability.
    Lemma \ref{lem:weak-law-for-Kn} gives that
    $$\frac{1}{n^3}\sum_{i=1}^{n^5}\mathbbm{1}_{\{K_n>i-1\}}\xrightarrow[n\rightarrow\infty]{\Prob}\frac{1}{24p}$$
    which finally implies
    $$\frac{1}{n^3}\sum_{i=1}^{n^5}\left(M_i^{(n)}-M_{i-1}^{(n)}\right)^2\xrightarrow[n\rightarrow\infty]{\mathbb{P}}2p(1-p)\frac{1}{24p}=\frac{1-p}{12}.$$
\end{proof}
We are now in a position to prove the central limit theorem for the center of mass of the final stable configuration.
\begin{proposition}
    For the sequence of centers of mass of the final stable configuration it holds that
    $$\frac{M_{K_n}^{(n)}}{n^{3/2}}\xrightarrow{\mathcal{D}}\mathcal{N}\Big(0,\frac{1-p}{12}\Big)$$
    as $n \rightarrow \infty.$
\end{proposition}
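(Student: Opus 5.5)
We apply the Martingale Central Limit Theorem to the triangular array obtained by rescaling the center of mass martingale and truncating at a deterministic time. Take $c_n:=n^5$, $\mathcal{F}_{n,i}:=\mathcal{F}_i^{(n)}$, and
$$S_{n,i}:=\frac{M_i^{(n)}}{n^{3/2}},\qquad X_{n,i}:=\frac{M_i^{(n)}-M_{i-1}^{(n)}}{n^{3/2}},\qquad 0\leq i\leq n^5.$$
Since $\eta_0^{(n)}=n\delta_0$, we have $M_0^{(n)}=0$, so $S_{n,0}=0$. Each $S_{n,i}$ has mean zero and is square integrable, because at most $n$ particles sit in $[-n,n]$ and so $|M_i^{(n)}|\leq n^2$. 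By the computation preceding Lemma \ref{lem:stab-time-expectation}, $(S_{n,i})_i$ is a martingale.

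A single toppling moves each particle by at most one site and moves at most two particles. Hence $|M_i^{(n)}-M_{i-1}^{(n)}|\leq 2$, and in fact the increment lies in $\{-1,0,1\}$, since under $\leftrightarrow$ the two moves cancel. Therefore $\max_i|X_{n,i}|\leq 2n^{-3/2}\to0$ deterministically, which gives condition 1). The same bound gives $\E[\max_i X_{n,i}^2]\leq 4n^{-3}$, which is bounded in $n$ and gives condition 3). Condition 2) is exactly Lemma \ref{lem:squared-difference-conv}: we have $\sum_{i\leq n^5}X_{n,i}^2=n^{-3}\sum_{i\leq n^5}(M_i^{(n)}-M_{i-1}^{(n)})^2\to\frac{1-p}{12}$ in probability. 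The theorem then yields
$$\frac{M_{n^5}^{(n)}}{n^{3/2}}\xrightarrow{\mathcal{D}}\mathcal{N}\Big(0,\frac{1-p}{12}\Big).$$

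It remains to replace $M_{n^5}^{(n)}$ by $M_{K_n}^{(n)}$. After time $K_n$ the configuration is frozen, so $M_{n^5}^{(n)}=M_{K_n}^{(n)}$ on the event $\{K_n\leq n^5\}$. By Markov's inequality and Lemma \ref{lem:stab-time-expectation},
$$\Prob(K_n>n^5)\leq \frac{\E K_n}{n^5}\leq\frac{1}{2pn^2}\to0.$$
Therefore $n^{-3/2}\big(M_{K_n}^{(n)}-M_{n^5}^{(n)}\big)\to0$ in probability, and Slutsky's lemma transfers the Gaussian limit to $M_{K_n}^{(n)}/n^{3/2}$.

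The main point needing care is this truncation step together with condition 2). Lemma \ref{lem:squared-difference-conv} is stated with the deterministic horizon $n^5$, and the choice $c_n=n^5$ must be consistent with it. Since the increments vanish after $K_n$, the sum up to $n^5$ agrees with the sum up to $K_n$ on $\{K_n\leq n^5\}$, so the two statements are consistent. The other conditions are immediate from the uniform bound on the increments.
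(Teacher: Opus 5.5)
Your proposal is correct and follows the paper's proof essentially step for step. You apply the martingale CLT to the array truncated at the deterministic horizon $n^5$, obtain conditions 1) and 3) from the unit bound on the increments and condition 2) from Lemma \ref{lem:squared-difference-conv}, and then replace $n^5$ by $K_n$ using $\Prob(K_n>n^5)\leq \E K_n/n^5\leq 1/(2pn^2)$ from Lemma \ref{lem:stab-time-expectation}.
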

\begin{proof}
   We first show the statement for  $(M_{n^5}^{(n)})_{n\in\N}$, by checking the conditions of the martingale central limit theorem. Since for any $k\in\N$ and all $n\in\N$, the martingale difference
    $$|M_{k+1}^{(n)}-M_k^{(n)}|\leq 1$$
    is always bounded in absolute value, we easily obtain
    $$\max_{k\leq n^5}\left|\frac{M_{k+1}^{(n)}-M_k^{(n)}}{n^{3/2}}\right|\xrightarrow[n\rightarrow\infty]{\mathbb{P}}0\hspace{0.5cm}\text{and}\hspace{0.5cm}\mathbb{E}\Bigg[\max_{k\leq n^5}\frac{\left(M_{k+1}^{(n)}-M_k^{(n)}\right)^2}{n^3}\Bigg]\text{ is bounded}.$$
    Furthermore, Lemma \ref{lem:squared-difference-conv} gives
    $$\frac{1}{n^3}\sum_{i=1}^{n^5}\left(M_i^{(n)}-M_{i-1}^{n)}\right)^2\xrightarrow[n\rightarrow\infty]{\mathbb{P}}\frac{1-p}{12}.$$
    We can thus apply the martingale central limit theorem to obtain
    $$\frac{M_{n^5}^{(n)}}{n^{3/2}}\xrightarrow{\mathcal{D}}\mathcal{N}\Big(0,\frac{1-p}{12}\Big)$$
    as $n \rightarrow \infty.$ It remains to exchange $n^5$ with the stopping time $K_n$. Since
    $$\mathbb{P}\left(\frac{M_{n^5}^{(n)}}{n^{3/2}}\neq \frac{M_{K_n}^{(n)}}{n^{3/2}}\right)\leq\mathbb{P}(K_n>n^5)\leq \frac{\mathbb{E}K_n}{n^5}\leq\frac{1}{2pn^2},$$
    where above we have used Lemma \ref{lem:stab-time-expectation} in the last inequality, it follows that the same convergence to the normal distribution holds for the martingale stopped at $K_n$. This finishes the proof.
\end{proof}

\begin{proof}[Proof of Theorem \ref{thm:main2}]
    We establish the convergence to a normal distribution only for the right boundary $\rightb{n}$, the proof for the left boundary works analogously. We have
    $$\frac{\rightb{n}-n/2}{\sqrt{n}}=\frac{\rightb{n}-\leftb{n}-n}{2\sqrt{n}}+\frac{\leftb{n}+\rightb{n}}{2\sqrt{n}}.$$
    Since $n\leq\rightb{n}-\leftb{n}\leq n+1$, we obtain
    $$\lim_{n\rightarrow\infty}\frac{\rightb{n}-\leftb{n}-n}{2\sqrt{n}}=0.$$
    For the remaining term, analogously to the proof of Proposition \ref{prop:lln}, we get
    $$\frac{\leftb{n}+\rightb{n}}{2\sqrt{n}}=\frac{n}{(\rightb{n}-\leftb{n}+1)}\frac{M_{K_n}^{(n)}}{n^{3/2}}-\frac{\mathsf{h}_n}{\sqrt{n}(\rightb{n}-\leftb{n}+1)},$$
    where $\mathsf{h}_n$ denotes the position of the vacant vertex in the final stable configuration. For the second summand above we have
    $$\frac{\mathsf{h}_n}{\sqrt{n}(\rightb{n}-\leftb{n}+1)}\leq\frac{n}{n^{3/2}}\rightarrow0.$$
    Finally, since
    $$\lim_{n\rightarrow\infty}\frac{n}{(\rightb{n}-\leftb{n}+1)}=1,$$
    we obtain from Slutsky's Theorem that
    $$\frac{\rightb{n}-n/2}{\sqrt{n}}\xrightarrow{\mathcal{D}}\mathcal{N}\Big(0,\frac{1-p}{12}\Big)$$
    as $n \rightarrow \infty.$
\end{proof}
\section{Conclusion}
In this paper, we established that the limit shape of the single-source stochastic sandpile model with $p$-topplings on $\Z$ is a symmetric interval, and the fluctuations at the boundaries converge to a Gaussian distribution. A natural direction for future research is to consider the single source model with different toppling rules. Our current analysis heavily relies on the property that the final stable configuration contains at most one hole. For toppling rules where particles can travel distances greater than one, this property may fail. Consequently, analyzing such models would first require rigorous bounds on the size and distribution of holes within the limit shape.

Additionally, investigating the single-source model on other state spaces, particularly $\Z^2$, presents a compelling challenge.
The limit shape of the single-source Abelian sandpile model in two dimensions is conjectured to deviate from a Euclidean ball. It is a challenging problem to consider the limit shape of single-source stochastic sandpiles with $p$-topplings in two  dimensions. We believe that due to the randomness in the toppling rule, the two-dimensional limit shape for stochastic sandpiles will be the Euclidean sphere.
\newline
\newline
\textbf{Funding Information.} This research was funded in part by the Austrian Science Fund (FWF) 10.55776/P34713 and 10.55776/PAT3123425.

\bibliographystyle{alpha}
\bibliography{lit}

@article {BTW88,
    AUTHOR = {Bak, Per and Tang, Chao and Wiesenfeld, Kurt},
     TITLE = {Self-organized criticality},
   JOURNAL = {Phys. Rev. A (3)},
  FJOURNAL = {Physical Review. A. Third Series},
    VOLUME = {38},
      YEAR = {1988},
    NUMBER = {1},
     PAGES = {364--374},
      ISSN = {1050-2947,1094-1622},
   MRCLASS = {58F13 (82A25 92A05)},
  MRNUMBER = {949160},
       DOI = {10.1103/PhysRevA.38.364},
       URL = {https://doi.org/10.1103/PhysRevA.38.364},
}

@article{BTW87,
  title = {Self-organized criticality: An explanation of the 1/f noise},
  author = {Bak, Per and Tang, Chao and Wiesenfeld, Kurt},
  journal = {Phys. Rev. Lett.},
  volume = {59},
  issue = {4},
  pages = {381--384},
  numpages = {0},
  year = {1987},
  month = {Jul},
  publisher = {American Physical Society},
  doi = {10.1103/PhysRevLett.59.381},
  url = {https://link.aps.org/doi/10.1103/PhysRevLett.59.381}
}

@article {D90,
    AUTHOR = {Dhar, Deepak},
     TITLE = {Self-organized critical state of sandpile automaton models},
   JOURNAL = {Phys. Rev. Lett.},
  FJOURNAL = {Physical Review Letters},
    VOLUME = {64},
      YEAR = {1990},
    NUMBER = {14},
     PAGES = {1613--1616},
      ISSN = {0031-9007},
   MRCLASS = {82A68 (82A60)},
  MRNUMBER = {1044086},
       DOI = {10.1103/PhysRevLett.64.1613},
       URL = {https://doi.org/10.1103/PhysRevLett.64.1613},
}

@article{M91,
doi = {10.1088/0305-4470/24/7/009},
url = {https://doi.org/10.1088/0305-4470/24/7/009},
year = {1991},
month = {apr},
publisher = {},
volume = {24},
number = {7},
pages = {L363},
author = {S S Manna},
title = {Two-state model of self-organized criticality},
journal = {Journal of Physics A: Mathematical and General}
}

@article {RS12,
    AUTHOR = {Rolla, Leonardo T. and Sidoravicius, Vladas},
     TITLE = {Absorbing-state phase transition for driven-dissipative
              stochastic dynamics on {${\Bbb Z}$}},
   JOURNAL = {Invent. Math.},
  FJOURNAL = {Inventiones Mathematicae},
    VOLUME = {188},
      YEAR = {2012},
    NUMBER = {1},
     PAGES = {127--150},
      ISSN = {0020-9910,1432-1297},
   MRCLASS = {60K35 (82C22)},
  MRNUMBER = {2897694},
MRREVIEWER = {Andrew\ R.\ Wade},
       DOI = {10.1007/s00222-011-0344-5},
       URL = {https://doi-org.tum-eaccess.de/10.1007/s00222-011-0344-5},
}

@ARTICLE{HHRR22,
       author = {{Hoffman}, Christopher and {Hu}, Yiping and {Richey}, Jacob and {Rizzolo}, Douglas},
        title = "{Active Phase for the Stochastic Sandpile on Z}",
      journal = {arXiv e-prints},
         year = 2022,
        month = dec,
          eid = {arXiv:2212.08293},
          note={arXiv:2212.08293},
          doi = {10.48550/arXiv.2212.08293},
archivePrefix = {arXiv},
       eprint = {2212.08293},
 primaryClass = {math.PR},
       adsurl = {https://ui.adsabs.harvard.edu/abs/2022arXiv221208293H}
}

@article{CF25, 
      author={Concetta Campailla and Nicolas Forien},
      title="{Stochastic Sandpile Model: exact sampling and complete graph}",
      journal={arXiv e-prints},
      year=2026,
      eid={arXiv:2507.01572},
      note={arXiv:2507.01572},
      doi={10.48550/arXiv.2507.01572},
      archivePrefix={arXiv},
      eprint={2507.01572},
      primaryClass={math.PR},
      url={https://arxiv.org/abs/2507.01572}, 
}

@article {ptoppling-CMS,
    AUTHOR = {Chan, Yao-ban and Marckert, Jean-Fran\c cois and Selig,
              Thomas},
     TITLE = {A natural stochastic extension of the sandpile model on a
              graph},
   JOURNAL = {J. Combin. Theory Ser. A},
  FJOURNAL = {Journal of Combinatorial Theory. Series A},
    VOLUME = {120},
      YEAR = {2013},
    NUMBER = {7},
     PAGES = {1913--1928},
      ISSN = {0097-3165,1096-0899},
   MRCLASS = {05C31 (05C30)},
  MRNUMBER = {3092706},
MRREVIEWER = {Fu\ Ji\ Zhang},
       DOI = {10.1016/j.jcta.2013.07.004},
       URL = {https://doi-org.tum-eaccess.de/10.1016/j.jcta.2013.07.004},
}

@ARTICLE{CFT24,
       author = {{Campailla}, Concetta and {Forien}, Nicolas and {Taggi}, Lorenzo},
        title = "{The critical density of the Stochastic Sandpile Model}",
      journal = {arXiv e-prints},
         year = 2024,
        month = oct,
          eid = {arXiv:2410.18905},
          note={arXiv:2410.18905},
          doi = {10.48550/arXiv.2410.18905},
archivePrefix = {arXiv},
       eprint = {2410.18905},
 primaryClass = {math.PR},
       adsurl = {https://ui.adsabs.harvard.edu/abs/2024arXiv241018905C}
}

@article {S24,
    AUTHOR = {Selig, Thomas},
     TITLE = {The stochastic sandpile model on complete graphs},
   JOURNAL = {Electron. J. Combin.},
  FJOURNAL = {Electronic Journal of Combinatorics},
    VOLUME = {31},
      YEAR = {2024},
    NUMBER = {3},
     PAGES = {Paper No. 3.26, 29},
      ISSN = {1077-8926},
   MRCLASS = {05A15 (05A19 60J10)},
  MRNUMBER = {4794445},
MRREVIEWER = {Luis\ Verde-Star},
       DOI = {10.37236/12780},
       URL = {https://doi-org.tum-eaccess.de/10.37236/12780},
}

@incollection {SZ25,
    AUTHOR = {Selig, Thomas and Zhu, Haoyue},
     TITLE = {Abelian and stochastic sandpile models on complete bipartite
              graphs},
 BOOKTITLE = {W{ALCOM}: algorithms and computation},
    SERIES = {Lecture Notes in Comput. Sci.},
    VOLUME = {15411},
     PAGES = {326--345},
 PUBLISHER = {Springer, Singapore},
      YEAR = {[2025] \copyright 2025},
      ISBN = {978-981-96-2844-5; 978-981-96-2845-2},
   MRCLASS = {68R10},
  MRNUMBER = {4887055},
       DOI = {10.1007/978-981-96-2845-2\_21},
       URL = {https://doi-org.tum-eaccess.de/10.1007/978-981-96-2845-2_21},
}

@article {LS24,
    AUTHOR = {Levine, Lionel and Silvestri, Vittoria},
     TITLE = {Universality conjectures for activated random walk},
   JOURNAL = {Probab. Surv.},
  FJOURNAL = {Probability Surveys},
    VOLUME = {21},
      YEAR = {2024},
     PAGES = {1--27},
      ISSN = {1549-5787},
   MRCLASS = {60K35 (82B26 82B27 82C22 82C26)},
  MRNUMBER = {4718500},
       DOI = {10.1214/24-ps25},
       URL = {https://doi-org.tum-eaccess.de/10.1214/24-ps25},
}

@article {LBG92,
    AUTHOR = {Lawler, Gregory F. and Bramson, Maury and Griffeath, David},
     TITLE = {Internal diffusion limited aggregation},
   JOURNAL = {Ann. Probab.},
  FJOURNAL = {The Annals of Probability},
    VOLUME = {20},
      YEAR = {1992},
    NUMBER = {4},
     PAGES = {2117--2140},
      ISSN = {0091-1798,2168-894X},
   MRCLASS = {60J15 (60K35 82B41)},
  MRNUMBER = {1188055},
MRREVIEWER = {H.\ Kesten},
       URL =
              {http://links.jstor.org.tum-eaccess.de/sici?sici=0091-1798(199210)20:4<2117:IDLA>2.0.CO;2-K&origin=MSN},
}

@article {LP09,
    AUTHOR = {Levine, Lionel and Peres, Yuval},
     TITLE = {Strong spherical asymptotics for rotor-router aggregation and
              the divisible sandpile},
   JOURNAL = {Potential Anal.},
  FJOURNAL = {Potential Analysis. An International Journal Devoted to the
              Interactions between Potential Theory, Probability Theory,
              Geometry and Functional Analysis},
    VOLUME = {30},
      YEAR = {2009},
    NUMBER = {1},
     PAGES = {1--27},
      ISSN = {0926-2601,1572-929X},
   MRCLASS = {60G50},
  MRNUMBER = {2465710},
       DOI = {10.1007/s11118-008-9104-6},
       URL = {https://doi-org.tum-eaccess.de/10.1007/s11118-008-9104-6},
}

@article {PS13,
    AUTHOR = {Pegden, Wesley and Smart, Charles K.},
     TITLE = {Convergence of the {A}belian sandpile},
   JOURNAL = {Duke Math. J.},
  FJOURNAL = {Duke Mathematical Journal},
    VOLUME = {162},
      YEAR = {2013},
    NUMBER = {4},
     PAGES = {627--642},
      ISSN = {0012-7094,1547-7398},
   MRCLASS = {60K35 (35D40 35J60 35R35 60J60)},
  MRNUMBER = {3039676},
MRREVIEWER = {Antal\ A.\ J\'arai},
       DOI = {10.1215/00127094-2079677},
       URL = {https://doi-org.tum-eaccess.de/10.1215/00127094-2079677},
}

@book {HaHe80,
    AUTHOR = {Hall, P. and Heyde, C. C.},
     TITLE = {Martingale limit theory and its application},
    SERIES = {Probability and Mathematical Statistics},
 PUBLISHER = {Academic Press, Inc. [Harcourt Brace Jovanovich, Publishers],
              New York-London},
      YEAR = {1980},
     PAGES = {xii+308},
      ISBN = {0-12-319350-8},
   MRCLASS = {60-02 (60B12 60F05 60G42)},
  MRNUMBER = {624435},
MRREVIEWER = {David\ J.\ Aldous},
}

@incollection {DF91,
    AUTHOR = {Diaconis, P. and Fulton, W.},
     TITLE = {A growth model, a game, an algebra, {L}agrange inversion, and
              characteristic classes},
      NOTE = {Commutative algebra and algebraic geometry, II (Italian)
              (Turin, 1990)},
   JOURNAL = {Rend. Sem. Mat. Univ. Politec. Torino},
  FJOURNAL = {Universit\`a{} e Politecnico di Torino. Seminario Matematico.
              Rendiconti},
    VOLUME = {49},
      YEAR = {1991},
    NUMBER = {1},
     PAGES = {95--119},
      ISSN = {0373-1243},
   MRCLASS = {60J10 (05A15 14F25)},
  MRNUMBER = {1218674},
}

\textsc{David Beck-Tiefenbach}, Universität Innsbruck, Institut für Mathematik, Technikerstraße 23, A-6020 Innsbruck, Austria. \texttt{david.beck-tiefenbach@uibk.ac.at}

\textsc{Robin Kaiser}, Departement of Mathematics, CIT, Technische Universität München, Boltzmannstr. 3, D-85748 Garching bei München, Germany. \texttt{ro.kaiser@tum.de}

\textsc{Julia Überbacher}, Universität Innsbruck, Institut für Mathematik, Technikerstraße 23, A-6020 Innsbruck, Austria. \texttt{julia.ueberbacher@uibk.ac.at}
\end{document}